\documentclass[11pt]{article}

\usepackage[margin=1in]{geometry}
\usepackage{amsmath,amssymb,amsthm,mathtools}
\usepackage{microtype}
\usepackage{enumitem}
\usepackage[hidelinks]{hyperref}

\allowdisplaybreaks
\setlist[enumerate]{leftmargin=*,itemsep=2pt,topsep=4pt}
\setlist[itemize]{leftmargin=*,itemsep=2pt,topsep=4pt}

\newtheorem{theorem}{Theorem}[section]
\newtheorem{proposition}[theorem]{Proposition}

\newtheorem{corollary}[theorem]{Corollary}
\theoremstyle{definition}
\newtheorem{definition}[theorem]{Definition}
\newtheorem{problem}[theorem]{Problem}
\theoremstyle{remark}

\newcommand{\exlin}{\operatorname{ex}^{\mathrm{lin}}}
\newcommand{\clin}{c^{\mathrm{lin}}}
\newcommand{\PG}{\operatorname{PG}}
\newcommand{\AG}{\operatorname{AG}}
\newcommand{\R}{\mathbb{R}}
\newcommand{\rank}{\operatorname{rank}}
\newcommand{\calE}{\mathcal E}
\newcommand{\calL}{\mathcal L}

\title{Blocking Amalgamations, Maximal Arcs, and Generalized Crowns}
\author{Mahesh Ramani}
\date{16 August 2026}

\begin{document}
\maketitle

\begin{abstract}
Let \(C^r_{1,k}\) be the \(r\)-uniform \(k\)-crown and put
\(h=r-k+2\).  For a finite linear intersecting \(r\)-uniform hypergraph
\(G\), let \(\tau_h(G)\) be the minimum size of a set meeting every edge
of \(G\) in at least \(h\) vertices, and define
\[
 \rho_{r,k}=\sup_G\frac{|E(G)|}{\tau_h(G)}.
\]
We prove that every fixed pair \((G,B)\), with \(B\) an \(h\)-fold
transversal, yields
\[
 \operatorname{ex}^{\mathrm{lin}}_r(n,C^r_{1,k})
 \ge \frac{|E(G)|}{|B|}n-O_{G,B}(\sqrt n)
\]
for all sufficiently large \(n\).  Incidence counting gives
\(\rho_{r,k}\le r/h\), and equality is characterized after dualization by
a pairwise balanced design with a distinguished regular subfamily.

For \(r=q+1\), where \(q\) is a prime power, truncated projective planes
give
\[
 \frac qh\le \rho_{q+1,k}\le\frac{q+1}{h}.
\]
The upper endpoint is attained whenever a maximal \(h\)-arc exists; in
particular, if \(q\) is even and \(h\mid q\), then
\(\rho_{q+1,k}=(q+1)/h\).  Padding the truncated-plane construction gives
\[
 \rho_{r,r}=(1-o(1))\frac r2
\]
and, uniformly for each fixed \(\varepsilon>0\) and
\(\varepsilon r\le k\le r\),
\[
 \rho_{r,k}=(1+o(1))\frac{r}{r-k+2}.
\]
For nonintersecting templates, the corresponding transfer is governed by a
local safe-block condition that replaces the \(h\)-fold transversal requirement.
\end{abstract}

\section{Introduction}\label{sec:intro}

A hypergraph $H=(V(H),E(H))$ is \emph{$r$-uniform} if every edge has size $r$, and it is \emph{linear} if two distinct edges intersect in at most one vertex.  For $3\le k\le r$, the $r$-uniform $k$-crown $C^r_{1,k}$ consists of a base edge $e_0$ and petals $e_1,\ldots,e_k$ such that the petals are pairwise disjoint and
\[
 e_0\cap e_i=\{v_i\}\qquad(1\le i\le k),
\]
where $v_1,\ldots,v_k$ are distinct vertices of $e_0$.  The case $k=r$ is the \emph{full crown}.

For a fixed linear $r$-uniform hypergraph $F$, let
\[
 \exlin_r(n,F)
\]
denote the maximum number of edges in an $n$-vertex linear $r$-uniform $F$-free hypergraph, and define
\[
 \clin_r(F):=\limsup_{n\to\infty}\frac{\exlin_r(n,F)}{n}.
\]
The problem considered here is to construct dense crown-free hypergraphs and to determine how far a natural block-amalgamation method can be optimized.

Zhang, Broersma, and Wang introduced generalized crowns in higher uniformity and, when $r-1$ is a prime power, constructed full-crown-free examples with limiting density $r/(r-1)$ \cite{ZhangBroersmaWang2025}.  Adak later proved the general upper bound
\begin{equation}\label{eq:adak-upper-intro}
 |E(H)|\le \frac{(k-1)(r-1)+1}{r}\,|V(H)|
\end{equation}
for every linear $C^r_{1,k}$-free $r$-uniform hypergraph, after discarding a nonnegative correction term from his sharper statement \cite{Adak2026}.  At the full-crown endpoint $r=q+1$, this upper coefficient is asymptotic to $q$.

The construction in this paper separates two roles.  A finite \emph{block} $G$ supplies many edges.  A set of block vertices remains private in each copy and prevents a crown, while the remaining vertices are shared among many copies through an auxiliary incidence structure.  The private set is measured by a multiple transversal number.

For an integer $h\ge1$, an \emph{$h$-fold transversal} of a hypergraph $G$ is a set $B\subseteq V(G)$ satisfying
\[
 |B\cap E|\ge h\qquad(E\in E(G)).
\]
Its minimum size is denoted by $\tau_h(G)$.  For crowns, the relevant multiplicity is
\[
 h=r-k+2.
\]
Indeed, if at least $h$ vertices of every base edge remain private, then at most $k-2$ base vertices are shared.  A putative $k$-crown must therefore attach at least two petals at private vertices, forcing those two petals into one copy of an intersecting block, where they cannot be disjoint.

This gives the block parameter
\begin{equation}\label{eq:rho}
 \rho_{r,k}:=
 \sup_G \frac{|E(G)|}{\tau_{r-k+2}(G)},
\end{equation}
where the supremum is over finite linear intersecting $r$-uniform hypergraphs without isolated vertices.  The parameter $\rho_{r,k}$ controls a family of explicit lower-bound constructions and admits an optimal general upper bound.

A port-shadow formulation of the amalgamation gives an estimate for every sufficiently large number of vertices.  For every fixed block $G$ and $h$-fold transversal $B$,
\begin{equation}\label{eq:intro-allorders}
 \exlin_r(n,C^r_{1,k})
 \ge \frac{|E(G)|}{|B|}n-O_{G,B}(\sqrt n).
\end{equation}
Incidence counting gives the bound
\begin{equation}\label{eq:intro-ceiling}
 \rho_{r,k}\le \frac{r}{r-k+2}.
\end{equation}
The equality case is rigid: after dualization it becomes a pairwise balanced design with a distinguished regular subfamily.  It also gives a size bound for equality blocks.

Finite geometry nearly saturates \eqref{eq:intro-ceiling} in every projective order.  Let $r=q+1$ and put
\[
 h=q-k+3.
\]
Deleting one point of $\PG(2,q)$ together with all lines through it gives a truncated projective plane $T_q$ with
\[
 \tau_h(T_q)=hq,
\]
and therefore
\begin{equation}\label{eq:intro-sandwich}
 \frac qh\le \rho_{q+1,k}\le \frac{q+1}{h}.
\end{equation}
The multiplicative gap is exactly $(q+1)/q$, independent of $k$.

Maximal arcs identify exact equality cases.  A maximal arc of degree $h$ in $\PG(2,q)$ is a point set meeting every projective line in either $0$ or $h$ points.  The hypergraph formed by its secant lines attains the upper value $(q+1)/h$.  Denniston's construction supplies maximal arcs of every degree $h$ dividing $q$ when $q$ is even \cite{Denniston1969,DeClerckDeWinterMaes2011}; nontrivial maximal arcs do not exist in Desarguesian planes of odd order \cite{BallBlokhuisMazzocca1997}.  Hence, for even prime-power $q$ and $h\mid q$,
\begin{equation}\label{eq:intro-exact}
 \rho_{q+1,k}=\frac{q+1}{h}.
\end{equation}
In particular, for the full crown,
\[
 \frac q2\le \rho_{q+1,q+1}\le \frac{q+1}{2}
\]
for every prime-power $q$, and equality with the upper value holds for every even $q$.  Thus the best coefficient furnished by intersecting-block amalgamations is asymptotically $q/2$.

Padding truncated projective planes with private degree-one vertices removes the restriction $r=q+1$.  This yields
\[
 \rho_{r,r}=(1-o(1))\frac r2
\]
as $r\to\infty$, and uniformly for every fixed $\varepsilon>0$ and $\varepsilon r\le k\le r$,
\[
 \rho_{r,k}=(1+o(1))\frac{r}{r-k+2}.
\]
These statements optimize the intersecting-block method asymptotically in every linear-size crown regime.  The unrestricted coefficient $\clin_r(C^r_{1,k})$ is a separate extremal parameter and may be larger.

Adak and Verma gave an affine-plane construction for the three-petal crown \cite{AdakVerma2026}; this is the case $h=q$ of the optimization problem \eqref{eq:rho}.

\section{Finite geometry and design preliminaries}\label{sec:prelim}

A hypergraph is \emph{intersecting} if every two distinct edges meet.  It is a \emph{star} if all edges contain one common vertex.  For a vertex $v$ of $G$, write $d_G(v)$ for its degree and $\Delta(G)$ for the maximum degree.

A projective plane of order $q$ has $q^2+q+1$ points and the same number of lines.  Each line contains $q+1$ points, each point lies on $q+1$ lines, and every two distinct points and every two distinct lines determine a unique line and point, respectively.  For every prime power $q$, the Desarguesian plane $\PG(2,q)$ exists.

An affine plane of order $K$ has $K^2$ points.  Its lines split into $K+1$ parallel classes.  Each parallel class consists of $K$ pairwise disjoint lines that partition the point set, and two distinct affine points lie on a unique line.  For every prime power $K$, the Desarguesian affine plane $\AG(2,K)$ exists.  Standard background on finite planes can be found in \cite{Dembowski1968}.

A \emph{pairwise balanced design} on a point set $X$ is a family of blocks such that every two distinct points of $X$ lie in exactly one common block.  The \emph{replication number} of a point is the number of blocks containing it.  We allow repeated singleton blocks; they do not affect the pair condition and will encode degree-one vertices after dualization.

\begin{definition}[Maximal arc]\label{def:maximal-arc}
Let $\Pi$ be a projective plane of order $q$.  A nonempty point set $A\subseteq P(\Pi)$ is a \emph{maximal arc of degree $h$} if every line of $\Pi$ meets $A$ in either $0$ or $h$ points.
\end{definition}

For $2\le h\le q$, counting incident pairs $(a,L)$ with $a\in A\cap L$ and pairs of points of $A$ on secants gives the standard size formula
\begin{equation}\label{eq:maxarc-size}
 |A|=q(h-1)+h.
\end{equation}
If $s$ is the number of secants, then
\[
 sh=(q+1)|A|,
 \qquad
 s\binom h2=\binom{|A|}{2}.
\]
Eliminating $s$ gives \eqref{eq:maxarc-size}.

\section{Constrained ports and the all-orders transfer}\label{sec:ports}

Let $G=(P,\calE)$ be a finite linear $r$-uniform hypergraph and let $B\subseteq P$.  Put $U=P\setminus B$.  The vertices in $B$ will remain private to each copy of $G$, while the vertices in $U$ will be shared through ports.  The only pairs of ports that must be separated are pairs that can occur together in one block edge.

\begin{definition}[Port-shadow graph]\label{def:shadow}
The \emph{port-shadow graph} $J_G(U)$ has vertex set $U$, with $uv\in E(J_G(U))$ if and only if some edge of $G$ contains both $u$ and $v$.
\end{definition}

\begin{definition}[Constrained port array]\label{def:constrained-port}
Let $J$ be a graph on $U$ and let $X$ be a finite copy set.  A \emph{$J$-constrained port array} on $X$ is a family of maps
\[
 \psi_u:X\longrightarrow Y_u\qquad(u\in U)
\]
such that for every edge $uv\in E(J)$, the map
\[
 x\longmapsto (\psi_u(x),\psi_v(x))
\]
is injective.
\end{definition}

Equivalently, for two distinct rows $x,y\in X$, the set of coordinates $u$ on which $\psi_u(x)=\psi_u(y)$ must be an independent set of $J$.  When $J$ is complete, this is a mixed-alphabet packing array of strength two; see \cite{StevensMendelsohn2004}.

\begin{theorem}[Constrained-port amalgamation]\label{thm:constrained}
Let $3\le k\le r$, put $h=r-k+2$, and let $G=(P,\calE)$ be a finite linear intersecting $r$-uniform hypergraph.  Let $B\subseteq P$ be an $h$-fold transversal, put $U=P\setminus B$, and let $(\psi_u)_{u\in U}$ be a $J_G(U)$-constrained port array on a finite set $X$.  Then there is a linear $r$-uniform $C^r_{1,k}$-free hypergraph $H$ with
\begin{equation}\label{eq:constrained-counts}
 |E(H)|=|X|\,|E(G)|,
 \qquad
 |V(H)|\le |X|\,|B|+\sum_{u\in U}|Y_u|.
\end{equation}
\end{theorem}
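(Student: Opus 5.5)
We build $H$ explicitly and then verify linearity and crown-freeness. Take the vertex set
\[
 V(H)=(X\times B)\;\sqcup\;\bigsqcup_{u\in U}\bigl(\{u\}\times Y_u\bigr),
\]
where we may discard unused port values; this gives the stated bound on $|V(H)|$. For each copy $x\in X$ define $\phi_x:P\to V(H)$ by $\phi_x(b)=(x,b)$ for $b\in B$ and $\phi_x(u)=(u,\psi_u(x))$ for $u\in U$. Let
\[
 E(H)=\{\phi_x(E):x\in X,\ E\in\calE\}.
\]
Each $\phi_x$ is injective, since distinct block vertices land in distinct coordinate classes. Hence every edge has size $r$ and each copy is isomorphic to $G$.

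\textbf{Linearity.} We show that two distinct images $\phi_x(E)$ and $\phi_y(E')$ share at most one vertex.
\begin{itemize}
\item If $x=y$, this follows from linearity of $G$ and injectivity of $\phi_x$.
\item If $x\ne y$, the images cannot share a private vertex, since private vertices carry their copy label. A shared vertex therefore has the form $(u,\psi_u(x))=(u,\psi_u(y))$ with $u\in E\cap E'\cap U$.
\item Suppose two such shared vertices $u\neq v$ existed. Then $u,v\in E$, so $uv\in E(J_G(U))$, and $(\psi_u,\psi_v)$ agrees at $x$ and $y$. This contradicts the port-array condition.
\end{itemize}
The same argument gives $|E(H)|=|X||E(G)|$. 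If $\phi_x(E)=\phi_y(E')$ with $x\ne y$, the common edge would consist entirely of ports, so $|E\cap U|=r$ with $r\ge 2$. This contradicts $|E\cap B|\ge h\ge 2$. Each edge of $H$ therefore has a well-defined copy label $x$ and a well-defined preimage $E$.

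\textbf{Crown-freeness.} This is the key step. Suppose $H$ contains a $k$-crown with base $\phi_x(E_0)$ and petals $e_1,\dots,e_k$ meeting the base at distinct vertices $v_1,\dots,v_k$. The base has at most $r-h=k-2$ port vertices, so by pigeonhole at least two attachment points, say $v_1,v_2$, are private vertices $(x,b_1),(x,b_2)$. A private vertex $(x,b)$ lies only in edges of copy $x$. Hence $e_1=\phi_x(E_1)$ and $e_2=\phi_x(E_2)$ with $E_1\neq E_2$, since $v_1\ne v_2$ and the petals are distinct from each other and from the base. Because $G$ is intersecting, $E_1\cap E_2\neq\emptyset$, so $e_1\cap e_2\neq\emptyset$. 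This contradicts the requirement that petals be pairwise disjoint.

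The only delicate points are the two uses of the hypothesis $h\ge 2$: it guarantees that each edge has a unique copy label, and it drives the pigeonhole step. Both hold because $k\le r$ forces $h=r-k+2\ge 2$.
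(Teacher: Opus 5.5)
Your proof is correct and follows the paper's construction and argument essentially step for step: the same private/port labelling $\phi_x$, linearity across copies via injectivity of $(\psi_u,\psi_v)$ on a shadow edge, and crown-freeness by forcing two petals onto private vertices of a single copy of the intersecting block. The differences are cosmetic: you skip the paper's split into $E=F$ and $E\neq F$ (since $u,v\in E$ already makes $uv$ a shadow edge), and your closing remark overstates the role of $h\ge 2$, because the unique copy label already follows from the linearity you proved together with $r\ge 3$.
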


\begin{proof}
For each $x\in X$ and each $p\in B$, create a private vertex $(x,p)$.  For $u\in U$, represent the copy of $u$ in row $x$ by the port vertex $(u,\psi_u(x))$.  Thus define
\[
 \phi_x(p)=
 \begin{cases}
  (x,p),&p\in B,\\
  (p,\psi_p(x)),&p\in U.
 \end{cases}
\]
The tags $p$ make $\phi_x$ injective.  For $E\in\calE$, put $E_x=\phi_x(E)$, and let
\[
 E(H)=\{E_x:E\in\calE,\ x\in X\}.
\]
Every edge has size $r$.

We prove linearity.  Two edges in the same copy meet in at most one vertex because $G$ is linear.  Suppose $x\ne y$ and that $E_x$ and $F_y$ have two common vertices.  Common vertices cannot be private, so their labels are two distinct ports $u,v\in U$.  If $E\ne F$, then $u,v\in E\cap F$, contradicting linearity of $G$.  If $E=F$, then $u$ and $v$ occur together in an edge of $G$, so $uv\in E(J_G(U))$.  The equalities $\psi_u(x)=\psi_u(y)$ and $\psi_v(x)=\psi_v(y)$ contradict the injectivity condition in Definition~\ref{def:constrained-port}.  Hence $H$ is linear.

Now fix a possible crown base $E_x$.  Because $B$ is an $h$-fold transversal,
\[
 |E\setminus B|\le r-h=k-2.
\]
A $k$-crown uses $k$ distinct attachment vertices on its base, so at least two attachments would lie at private vertices $(x,p)$ with $p\in B\cap E$.  Any edge containing such a private vertex belongs to copy $x$.  Thus the two corresponding petals are images of two distinct edges of the intersecting hypergraph $G$ inside the same copy; they meet, contradicting the definition of a crown.  Therefore $H$ is crown-free.

Finally, there are $|X||B|$ private vertices and at most $\sum_{u\in U}|Y_u|$ port vertices.  Each pair $(x,E)$ contributes one edge, and distinct pairs cannot produce the same $r$-set in a linear hypergraph.  This gives \eqref{eq:constrained-counts}.
\end{proof}

An affine-plane instance is obtained by properly coloring $J_G(U)$ with at most $K+1$ colors and assigning one parallel class of $\AG(2,K)$ to each color.  For every port $u$, let $\psi_u(x)$ be the affine line in the assigned class through $x$, with the symbol tagged by $u$.  If $uv$ is a shadow edge, then $u$ and $v$ receive different directions; two distinct affine directions through a point determine that point, so the required pair map is injective.  This gives $K^2$ rows and $|Y_u|=K$ for every port.  It is therefore enough that
\[
 K+1\ge \chi(J_G(U)).
\]
If the shadow is empty, every $\psi_u$ may instead be constant.

\begin{corollary}[All-orders transfer]\label{cor:allorders}
Let $G$ and $B$ satisfy the hypotheses of Theorem~\ref{thm:constrained}, and write $m=|E(G)|$ and $b=|B|$.  Then for all sufficiently large $n$,
\begin{equation}\label{eq:allorders}
 \exlin_r(n,C^r_{1,k})
 \ge \frac{m}{b}n-O_{G,B}(\sqrt n).
\end{equation}
Consequently,
\begin{equation}\label{eq:liminf-rho}
 \liminf_{n\to\infty}
 \frac{\exlin_r(n,C^r_{1,k})}{n}
 \ge \rho_{r,k}.
\end{equation}
If $J_G(P\setminus B)$ is empty, the error term in \eqref{eq:allorders} improves to $O_G(1)$.
\end{corollary}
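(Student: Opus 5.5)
The plan is to apply Theorem~\ref{thm:constrained} with the affine-plane port array described after it, and then pad the construction with isolated vertices so that every large $n$ is covered. Write $U=P\setminus B$, $\chi=\chi(J_G(U))$ and $u_0=|U|$; these are constants depending only on $(G,B)$. Given $n$, we choose a prime power $K$ with $K+1\ge\chi$. The affine-plane instance then provides a $J_G(U)$-constrained port array on $X=\AG(2,K)$ with $|X|=K^2$ and $|Y_u|=K$ for every port. Theorem~\ref{thm:constrained} yields a linear $C^r_{1,k}$-free hypergraph $H_K$ with
\[
 |E(H_K)|=mK^2,\qquad |V(H_K)|\le bK^2+u_0K .
\]
Adding isolated vertices preserves linearity and crown-freeness. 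Hence, whenever $bK^2+u_0K\le n$, we get $\exlin_r(n,C^r_{1,k})\ge mK^2$.

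The obstacle is that prime powers are not dense enough to make $bK^2\approx n$ with error $O(\sqrt n)$. Choosing $K$ as the largest prime power with $bK^2+u_0K\le n$ only gives $K=(1-o(1))\sqrt{n/b}$ via Bertrand or prime gaps, which loses a multiplicative factor. I would avoid this by taking a disjoint union of several blocks. Fix $K_0$, a prime power with $K_0+1\ge\chi$, and let $X$ be a disjoint union of $t$ copies of $\AG(2,K_0)$, so $|X|=tK_0^2$. The port symbols must be kept distinct across copies (tagged by the copy index), otherwise injectivity fails between rows of different copies. This costs $t u_0K_0$ port vertices, which is linear in $n$ and gives the wrong coefficient.

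The better route uses a single array with unequal alphabet usage. Take $K$ the smallest prime power with $bK^2\ge n$, so $K=\Theta(\sqrt n)$, and use only a subset $X'\subseteq\AG(2,K)$ of rows. A subset of rows of a constrained port array is still a constrained port array, since injectivity restricts. Choose $|X'|=\lfloor (n-u_0K)/b\rfloor$, which is at most $K^2$ because $bK^2\ge n$. The vertex count is then at most $b|X'|+u_0K\le n$, and the edge count is
\[
 m|X'|\ge \frac{m}{b}(n-u_0K)-m=\frac mb n-O_{G,B}(\sqrt n).
\]
For this to hold we need $K=O(\sqrt n)$, which Bertrand's postulate gives: a power of $2$ lies in $[\sqrt{n/b},2\sqrt{n/b}]$. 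We also need $n$ large enough that $K+1\ge\chi$ and $n-u_0K\ge0$. This row-restriction step is the key point; once it is made, the rest is bookkeeping.

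For the liminf statement, divide by $n$ and let $n\to\infty$ to get $\liminf \exlin_r(n,C^r_{1,k})/n\ge m/b$ for every admissible pair $(G,B)$. Taking $B$ to be a minimum $h$-fold transversal gives $m/\tau_h(G)$, and the supremum over $G$ gives $\rho_{r,k}$. When the shadow is empty, every $\psi_u$ is constant, so $|Y_u|=1$ and $X$ may be an arbitrary set of size $\lfloor (n-u_0)/b\rfloor$. This gives $|E(H)|\ge \frac mb n - O_G(1)$.
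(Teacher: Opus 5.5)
Your final route is the paper's proof: the affine-plane port array of order $K=\Theta(\sqrt{n/b})$ with $bK^2\ge n$, restricted to $\lfloor (n-|U|K)/b\rfloor$ rows and padded with isolated vertices, with the empty-shadow case and the liminf argument handled the same way. The only cosmetic difference is that you take a power of $2$ in $[\sqrt{n/b},2\sqrt{n/b}]$, which is elementary and not Bertrand's postulate, whereas the paper takes a prime via Bertrand.
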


\begin{proof}
Let $U=P\setminus B$ and put $s=|U|$.  Set
\[
 M=\left\lceil\sqrt{n/b}\right\rceil.
\]
For all sufficiently large $n$, Bertrand's postulate gives a prime $K$ with
\[
 M\le K<2M,
\]
and $K+1\ge\chi(J_G(U))$.  Use the affine-plane port array described above, but retain only
\[
 N=\left\lfloor\frac{n-sK}{b}\right\rfloor
\]
rows.  Since $K^2\ge M^2\ge n/b\ge N$, enough rows are available.  The resulting hypergraph has at most
\[
 bN+sK\le n
\]
vertices and exactly $mN$ edges.  Adding isolated vertices if necessary makes the vertex count exactly $n$.  Because $K=O(\sqrt n)$,
\[
 N=\frac nb-O_{G,B}(\sqrt n),
\]
which proves \eqref{eq:allorders}.

If the shadow is empty, take arbitrary $N=\lfloor(n-s)/b\rfloor$ rows and constant port maps.  Then the construction uses at most $bN+s$ vertices and has $mN=(m/b)n-O_G(1)$ edges.

For \eqref{eq:liminf-rho}, first take $B$ to be a minimum $h$-fold transversal of a fixed $G$, then take the liminf in \eqref{eq:allorders}, and finally take the supremum over $G$ in \eqref{eq:rho}.
\end{proof}

\section{Efficiency bound and equality}\label{sec:efficiency}

The ratio defining $\rho_{r,k}$ satisfies a general upper bound.  The proof also gives an exact defect identity that records every source of inefficiency.

\begin{theorem}[Efficiency bound]\label{thm:ceiling}
Let $3\le k\le r$ and put $h=r-k+2$.  Then
\begin{equation}\label{eq:ceiling}
 \rho_{r,k}\le \frac rh.
\end{equation}
More precisely, let $G$ be a finite linear intersecting $r$-uniform hypergraph with $m$ edges and let $B$ be an $h$-fold transversal.  If $G$ is not a star, then
\begin{equation}\label{eq:defect}
 r|B|-hm
 =\sum_{v\in B}\bigl(r-d_G(v)\bigr)
 +\sum_{E\in E(G)}\bigl(|E\cap B|-h\bigr).
\end{equation}
In the non-star case, equality $m/|B|=r/h$ holds if and only if every $v\in B$ has degree $r$ and every edge meets $B$ in exactly $h$ vertices.
\end{theorem}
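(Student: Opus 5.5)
Our approach is double counting of incidences between $B$ and the edges, combined with the observation that in a non-star linear intersecting hypergraph every vertex has degree at most $r$.

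\emph{Degree bound.} We first show $d_G(v)\le r$ for every vertex $v$ when $G$ is not a star. Since $G$ is not a star, some edge $F$ avoids $v$. Every edge through $v$ meets $F$, because $G$ is intersecting. Two distinct edges through $v$ cannot meet $F$ at the same point $w$: they would then share both $v$ and $w$, contradicting linearity. So the edges through $v$ inject into the $r$ points of $F$, and $d_G(v)\le r$.

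\emph{Defect identity.} Count the incident pairs $(v,E)$ with $v\in B\cap E$ in two ways:
\[
 \sum_{v\in B} d_G(v)=\sum_{E\in E(G)}|E\cap B|.
\]
Hence
\[
 r|B|-hm=\sum_{v\in B}\bigl(r-d_G(v)\bigr)+\Bigl(\sum_{v\in B}d_G(v)-hm\Bigr),
\]
and the last bracket equals $\sum_{E}(|E\cap B|-h)$. This is exactly \eqref{eq:defect}.

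\emph{Bound and equality.} Both sums in \eqref{eq:defect} are nonnegative: the first by the degree bound, the second because $B$ is an $h$-fold transversal. Therefore $m/|B|\le r/h$. Equality holds if and only if every term of both sums vanishes, which is precisely the stated characterization.

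\emph{Star case.} Here the defect identity is not claimed, and the vertex degree can be large, so we handle the ratio directly. Suppose $G$ is a star with centre $c$ and $m$ edges. By linearity, the edges pairwise meet only in $c$, so the sets $E\setminus\{c\}$ are pairwise disjoint, each of size $r-1$. We bound $|B|$ from below:
\begin{itemize}
\item If $c\notin B$, each edge needs $h$ private vertices of $B$, so $|B|\ge hm$ and $m/|B|\le 1/h\le r/h$.
\item If $c\in B$, each edge needs $h-1$ further vertices, so $|B|\ge 1+(h-1)m$.
\begin{itemize}
\item When $h\ge2$, this gives $m/|B|\le m/(1+(h-1)m)<1/(h-1)\le r/h$, where the last step uses $h\le r$ together with $h\ge 2$; indeed $r(h-1)\ge h$ holds whenever $h\ge2$ and $r\ge2$.
\item When $h=1$, i.e.\ $k=r+1$, we are outside the range $k\le r$, since $k\le r$ forces $h\ge2$.
\end{itemize}
\end{itemize}
So the bound holds for stars as well.

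Finally, \eqref{eq:ceiling} follows by taking $B$ to be a minimum $h$-fold transversal and then the supremum over $G$.

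The main subtlety is the degree bound: it is where the non-star hypothesis enters. The star case needs the separate check above, because there the degrees are unbounded and the identity does not give the bound.
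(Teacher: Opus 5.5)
Your proposal is correct and follows essentially the same route as the paper: the same degree bound from an edge avoiding $v$, the same double count giving the defect identity and the equality characterization, and the same star-case estimate $m/(1+(h-1)m)<1/(h-1)\le r/h$. The one small difference is in the star case, where the paper simply asserts that a minimum $h$-fold transversal has size $1+(h-1)m$; your case split on whether the centre lies in $B$ supplies the justification for that assertion.
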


\begin{proof}
If $G$ is not a star, then
\begin{equation}\label{eq:maxdeg}
 \Delta(G)\le r.
\end{equation}
Indeed, suppose a vertex $v$ lies in more than $r$ edges.  If some edge $F$ omitted $v$, then, because $G$ is intersecting, $F$ would have to meet every edge through $v$.  These intersection points would be distinct: two edges through $v$ cannot share a second vertex in a linear hypergraph.  Thus the $r$ vertices of $F$ would have to contain more than $r$ distinct intersection points, a contradiction.  Hence every edge contains $v$, so $G$ is a star.

Assume first that $G$ is not a star.  By \eqref{eq:maxdeg}, incidence counting gives
\[
 hm
 \le \sum_{E\in E(G)}|E\cap B|
 =\sum_{v\in B}d_G(v)
 \le r|B|.
\]
Thus $m/|B|\le r/h$.  Moreover, subtracting the common incidence sum from both ends gives exactly \eqref{eq:defect}.  Every term on the right is nonnegative, so equality holds precisely under the two stated conditions.

Now suppose $G$ is a star with center $v$ and $m$ edges.  Because $G$ is linear, the sets $E\setminus\{v\}$ are pairwise disjoint.  Since $h\ge2$, a minimum $h$-fold transversal consists of the center together with $h-1$ additional vertices from each edge, and therefore has size
\[
 1+(h-1)m.
\]
Consequently
\[
 \frac{m}{\tau_h(G)}
 =\frac{m}{1+(h-1)m}
 <\frac{1}{h-1}
 \le \frac rh.
\]
This proves \eqref{eq:ceiling} in all cases.
\end{proof}

The equality conditions have a dual interpretation.  This is not needed for the numerical ceiling, but it explains why design theory and projective geometry appear in the extremizers.

\begin{theorem}[Equality structure]\label{thm:dual-normal}
Suppose a non-star pair $(G,B)$ attains equality in Theorem~\ref{thm:ceiling}.  Put $m=|E(G)|$ and, on point set $E(G)$, define
\[
 Q_v:=\{E\in E(G):v\in E\}
 \qquad(v\in V(G)).
\]
Then the family $\{Q_v:v\in V(G)\}$ is a pairwise balanced design with the following properties:
\begin{enumerate}[label=(\roman*)]
 \item every design point has replication number $r$;
 \item every design block has size at most $r$;
 \item the blocks $Q_v$ with $v\in B$ all have size $r$, and every design point lies in exactly $h$ of these selected blocks.
\end{enumerate}
Conversely, dualizing any pairwise balanced design with properties (i)--(iii) gives an equality pair.  In addition,
\begin{equation}\label{eq:m-bound}
 m\le r(r-1).
\end{equation}
If $m=r(r-1)$, then $h=r-1$; adjoining one edge to $G$ produces a projective plane of order $r-1$, and $B$ is the complement of the adjoined edge.
\end{theorem}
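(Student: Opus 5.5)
Dualizing gives most of the statement almost for free, so the plan is to read each property off from Theorem~\ref{thm:ceiling}, then handle the converse, and spend the real effort on \eqref{eq:m-bound} and its equality case.

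\textbf{Pairwise balance and (i)--(iii).} Two distinct edges $E,F$ of $G$ meet, since $G$ is intersecting, and meet in at most one vertex, since $G$ is linear. So exactly one $v$ has $E,F\in Q_v$, and $\{Q_v\}$ is a pairwise balanced design on $E(G)$. Repeated singleton blocks come from distinct degree-one vertices, which the definition allows. The three properties translate as follows:
\begin{enumerate}[label=(\roman*)]
 \item The replication number of a point $E$ is $|E|=r$.
 \item $|Q_v|=d_G(v)\le r$ by \eqref{eq:maxdeg}, because $G$ is not a star.
 \item The equality conditions of Theorem~\ref{thm:ceiling} say that $d_G(v)=r$ for every $v\in B$ and that $|E\cap B|=h$ for every edge $E$; these are exactly (iii).
\end{enumerate}

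\textbf{Converse.} Given such a design on a point set $\mathcal P$, take vertices the blocks and edges $E_p=\{Q:p\in Q\}$. Each $E_p$ is an $r$-set by (i). Pairwise balance makes the edges pairwise intersecting in exactly one vertex, so the hypergraph is linear and intersecting; distinct points give distinct edges once $m\ge2$. Let $B$ be the selected blocks. By (iii), $B$ is an $h$-fold transversal meeting each edge in exactly $h$ vertices, and each $v\in B$ has degree $r$. The defect identity \eqref{eq:defect} then gives $r|B|=hm$. I also need the dual to be a non-star. A star would force a single block to contain all points, so its size is $m$. Its points would then share no other block, so every other block is a singleton. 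Choosing a point, its $r$ blocks include at least $h\ge 2$ selected blocks, each of size $r\ge3$ by (iii), so the point lies in two non-singleton blocks, a contradiction.

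\textbf{The bound $m\le r(r-1)$.} Fix a design point $E$. Its $r$ blocks $Q_v$, $v\in E$, partition the other $m-1$ points, since each other point shares exactly one block with $E$. Hence
\[
 m-1=\sum_{v\in E}(d_G(v)-1)\le r(r-1),
\]
which gives only $m\le r^2-r+1$. This is the main obstacle: I must gain $1$ from the equality structure. Suppose $m-1=r(r-1)$. Then every vertex of every edge has degree $r$, so $G$ is $r$-regular with $m=r^2-r+1$ edges. Two edges always meet and every vertex has degree $r$, so $G$ is a projective plane of order $r-1$ with all lines present. A set $B$ meeting every line in exactly $h$ points, with $2\le h\le r-1$, is a maximal arc satisfying $|B|=hm/r$. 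But $r\nmid h(r^2-r+1)$, because $\gcd(r,r^2-r+1)=1$ and $h<r$. This contradiction yields \eqref{eq:m-bound}.

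\textbf{Equality case $m=r(r-1)$.} Now the sum $\sum_{v\in E}(d(v)-1)$ equals $r(r-1)-1$ for every edge $E$, so each edge contains exactly one vertex of degree $r-1$ and the rest have degree $r$. Let $L$ be the set of degree-$(r-1)$ vertices. These vertices are pairwise noncollinear, and they cover each edge exactly once. Counting incidences, $|L|(r-1)=m$, so $|L|=r$. Adjoining $L$ as a new edge gives an $r$-regular linear intersecting family with $r^2-r+1$ lines, i.e.\ a projective plane of order $r-1$, by the same counting as above. Since $d(v)=r$ for $v\in B$, we have $B\cap L=\emptyset$. Then $|B|=hm/r=h(r-1)$, and every edge meets $B\cup L$ in $h+1$ points. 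I would count incidences and collinear pairs on $B$ as in \eqref{eq:maxarc-size} to force $h=r-1$: the non-$L$ points of each edge number $r-1$, and they must all lie in $B$ once $B$ is a maximal-arc-type set of size $h(r-1)$ avoiding $L$. That gives $B=V\setminus L$ and $|B|=(r-1)^2=h(r-1)$, hence $h=r-1$. The step forcing $B$ to be the full complement of $L$ is the one I would verify most carefully, by checking the two-value intersection numbers against the line $L$.
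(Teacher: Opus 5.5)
Up to the last step, your proposal follows the paper's proof: the dualization giving (i)--(iii), the partition bound $m-1\le r(r-1)$ with the $\gcd(r,r^2-r+1)=1$ contradiction, exactly one degree-$(r-1)$ vertex per edge, $|L|=r$, completion to a plane of order $r-1$, and $B\cap L=\emptyset$. Your converse is more careful than the paper's one-line ``reverse the dualization''. You check that distinct points give distinct edges and that the dual is not a star, and that argument is correct.

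The final step is not finished, and as written it is circular. Every old edge has exactly $r-1$ points off $L$. It also has exactly $h$ points of $B$, all off $L$. So the claim that the non-$L$ points of every edge all lie in $B$ is equivalent to $h=r-1$, and it cannot be used to derive $h=r-1$.

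The tool you name in the preceding sentence does the job if you apply it first.
\begin{itemize}
 \item In the completed plane, old lines meet $B$ in exactly $h$ points and $L$ meets $B$ in $0$ points. So $B$ is a maximal arc of degree $h$, and \eqref{eq:maxarc-size} gives $|B|=(r-1)(h-1)+h$.
 \item Equivalently, the $r$ lines through a point of $B$ partition the remaining points of $B$ into groups of $h-1$, so $|B|-1=r(h-1)$.
 \item This uses that any two points are collinear in the completed structure. That is exactly what the pair count $(r^2-r+1)\binom r2=\binom{r^2-r+1}{2}$ supplies. Your phrase ``by the same counting as above'' points to a count you never actually carried out, so you need to include it.
 \item Comparing with $|B|=h(r-1)$ gives $h=r-1$, hence $|B|=(r-1)^2=|V\setminus L|$.
 \item Together with $B\cap L=\emptyset$, this forces $B=V\setminus L$.
\end{itemize}
No further check of intersection numbers against $L$ is needed.
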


\begin{proof}
Because $G$ is linear and intersecting, any two distinct edges of $G$ meet in exactly one vertex.  Hence any two distinct points of the dual point set $E(G)$ lie in exactly one common block $Q_v$.  A dual point corresponding to an edge $E$ lies in the $r$ blocks indexed by the vertices of $E$, proving (i).  The non-star degree bound \eqref{eq:maxdeg} gives (ii).  Equality in Theorem~\ref{thm:ceiling} says that every $v\in B$ has degree $r$ and every original edge contains exactly $h$ vertices of $B$, which is exactly (iii).  The converse follows by reversing this dualization.

Fix a dual point $x$.  Its other $m-1$ points are partitioned among the $r$ blocks containing $x$.  Since each such block has size at most $r$,
\[
 m-1\le r(r-1).
\]
If $m=r(r-1)+1$, then equality holds in this partition bound for every $x$, so every dual block has size $r$.  The dual is then a projective plane of order $r-1$.  Property (iii) gives the selected-block incidence equation
\[
 r|B|=hm=h(r^2-r+1).
\]
But $1\le h\le r-1$ and
\[
 \gcd(r,r^2-r+1)=1,
\]
so the right side is not divisible by $r$, a contradiction.  Therefore \eqref{eq:m-bound} holds.

Assume now that $m=r(r-1)$.  For any dual point $x$, the total capacity of its $r$ incident blocks is $r(r-1)$ other-point incidences, whereas only $m-1=r(r-1)-1$ other points must be covered.  Since the incident blocks partition the other points, there is exactly one unit of deficit: $x$ lies in exactly one block of size $r-1$ and in $r-1$ blocks of size $r$.  Thus the size-$(r-1)$ blocks partition the $m$ dual points.  There are
\[
 \frac{m}{r-1}=r
\]
such blocks.

Return to the original hypergraph.  The $r$ vertices corresponding to these size-$(r-1)$ dual blocks each have degree $r-1$, and every original edge contains exactly one of them.  Adjoin a new edge consisting of these $r$ vertices.  The new edge meets every old edge exactly once.  After adjoining it, every vertex has degree $r$, every edge has size $r$, and every two edges meet exactly once.  The resulting incidence structure has $r^2-r+1$ lines and, by counting incidences, the same number of points.  Linearity implies that no pair of points lies on two lines, while
\[
 (r^2-r+1)\binom r2=\binom{r^2-r+1}{2}.
\]
Thus every pair of points lies on a line, and the completed structure is a projective plane of order $r-1$.

The new edge is disjoint from $B$, because vertices of $B$ had degree $r$ already in $G$.  Every old line meets $B$ in exactly $h$ points.  Hence $B$ is a maximal arc of degree $h$ in the completed plane.  By \eqref{eq:maxarc-size},
\[
 |B|=(r-1)(h-1)+h.
\]
On the other hand, equality $r|B|=hm$ gives $|B|=h(r-1)$.  Equating the two expressions yields $h=r-1$.  In that case $|B|=(r-1)^2$, so $B$ is exactly the complement of the new line.
\end{proof}

\section{Maximal arcs and equality}\label{sec:maxarcs}

The universal ceiling is attained by selecting the correct line subfamily of a projective plane.  The relevant line subfamilies are precisely the secants of maximal arcs.

\begin{proposition}[Equality within a projective plane]\label{prop:projective-equality}
Let $\Pi$ be a projective plane of order $q$, let $\calL'\subseteq\calL(\Pi)$, and let $G=(P(\Pi),\calL')$.  If $B$ is an $h$-fold transversal of $G$, where $2\le h\le q$, then
\begin{equation}\label{eq:projective-bound}
 \frac{|\calL'|}{|B|}\le \frac{q+1}{h}.
\end{equation}
Equality holds if and only if $B$ is a maximal arc of degree $h$ in $\Pi$ and $\calL'$ is exactly the set of lines meeting $B$ in $h$ points.
\end{proposition}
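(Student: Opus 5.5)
Since every point of a projective plane lies on exactly $q+1$ lines, we run the same incidence count as in Theorem~\ref{thm:ceiling}, this time with the ambient plane's degree bound and no star case. Count the incident pairs $(v,L)$ with $v\in B$ and $L\in\calL'$:
\[
 h|\calL'|\le\sum_{L\in\calL'}|L\cap B|=\sum_{v\in B}d_G(v)\le (q+1)|B|.
\]
The first inequality uses that $B$ is an $h$-fold transversal. The last uses $d_G(v)\le d_\Pi(v)=q+1$. Dividing gives \eqref{eq:projective-bound}. Exactly as in the defect identity \eqref{eq:defect}, equality holds if and only if two conditions hold: every line of $\calL'$ meets $B$ in exactly $h$ points, and every point of $B$ lies on all $q+1$ lines through it, i.e.\ every line of $\Pi$ meeting $B$ belongs to $\calL'$.

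\emph{Equality implies a maximal arc.} Take any line $L$ of $\Pi$ with $L\cap B\ne\emptyset$ and pick $v\in L\cap B$. The second condition puts $L$ in $\calL'$, and the first then gives $|L\cap B|=h$. So every line meets $B$ in $0$ or $h$ points. Also $B\ne\emptyset$, because it must meet the lines of $\calL'$. (If $\calL'$ is empty the ratio is $0$, so equality forces $\calL'\neq\emptyset$, since $q+1>0$.) Hence $B$ is a maximal arc of degree $h$. The lines of $\calL'$ meet $B$, and every line meeting $B$ lies in $\calL'$, so $\calL'$ is exactly the set of $h$-secants.

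\emph{A maximal arc gives equality.} Conversely, let $B$ be a maximal arc of degree $h$ and let $\calL'$ be its secants. Then each point of $B$ lies on $q+1$ lines, all of which are secants, and each secant contains exactly $h$ points of $B$. Both inequalities are therefore tight, and $|\calL'|h=(q+1)|B|$. This matches the secant count $sh=(q+1)|A|$ from Section~\ref{sec:prelim}.

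The only delicate point is the degenerate case: we need $\calL'$ nonempty in the equality direction so that $B$ is a nonempty maximal arc. This follows because equality with a positive right-hand side forces $|\calL'|>0$. Note that $G$ may have isolated vertices, namely points on no line of $\calL'$; this is harmless, because the count only involves points of $B$. The hypothesis $2\le h\le q$ is needed only so that the maximal arcs in question are genuine, nontrivial ones.
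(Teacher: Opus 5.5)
Your proof is correct and matches the paper's argument: the double incidence count $h|\calL'|\le I(B,\calL')\le(q+1)|B|$, where equality forces every selected line to meet $B$ in exactly $h$ points and every line through a point of $B$ to be selected. Your explicit check that equality forces $\calL'\neq\emptyset$, hence $B\neq\emptyset$, so that $B$ is a genuine maximal arc, is a small point of care that the paper leaves implicit.
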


\begin{proof}
Count incidences between $B$ and the selected lines.  Because $B$ is an $h$-fold transversal,
\[
 h|\calL'|\le I(B,\calL').
\]
Each point of $B$ lies on at most $q+1$ selected lines, so
\[
 I(B,\calL')\le(q+1)|B|.
\]
This proves \eqref{eq:projective-bound}.

Suppose equality holds.  Then every selected line contains exactly $h$ points of $B$, and every point of $B$ lies on all $q+1$ projective lines through it, so every projective line meeting $B$ is selected.  Therefore an unselected line is disjoint from $B$, while a selected line meets $B$ in exactly $h$ points.  Thus $B$ is a maximal arc and $\calL'$ is its secant set.  The converse is immediate from the same incidence count.
\end{proof}

Let $A$ be a maximal $h$-arc and let $G_A$ be the hypergraph whose edges are the secant lines of $A$.  From incidence counting,
\begin{equation}\label{eq:secant-count}
 |E(G_A)|=\frac{q+1}{h}|A|.
\end{equation}
Since $A$ is an $h$-fold transversal, $\tau_h(G_A)\le|A|$.  Theorem~\ref{thm:ceiling} gives the reverse conclusion: if $\tau_h(G_A)<|A|$, then \eqref{eq:secant-count} would make $|E(G_A)|/\tau_h(G_A)>(q+1)/h$, contradicting the ceiling for $r=q+1$.  Hence
\begin{equation}\label{eq:arc-tau}
 \tau_h(G_A)=|A|.
\end{equation}

\begin{theorem}[Maximal-arc construction]\label{thm:maxarc-rho}
Let $q$ be a prime power, let $3\le k\le q+1$, and put $h=q-k+3$.  If $\PG(2,q)$ contains a maximal arc of degree $h$, then
\begin{equation}\label{eq:maxarc-rho}
 \rho_{q+1,k}=\frac{q+1}{h}.
\end{equation}
In particular,
\begin{align}
 \rho_{q+1,3}&=\frac{q+1}{q}
 &&\text{for every prime power }q,\label{eq:k3-exact}\\
 \rho_{q+1,k}&=\frac{q+1}{q-k+3}
 &&\text{if $q$ is even and $q-k+3\mid q$},\label{eq:even-general-exact}\\
 \rho_{q+1,q+1}&=\frac{q+1}{2}
 &&\text{for every even prime power }q.\label{eq:full-even}
\end{align}
\end{theorem}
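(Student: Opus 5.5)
The upper bound $\rho_{q+1,k}\le (q+1)/h$ is Theorem~\ref{thm:ceiling} with $r=q+1$ and $h=r-k+2=q-k+3$. For the matching lower bound I will show that $G_A$ is an admissible block and then invoke \eqref{eq:secant-count} and \eqref{eq:arc-tau}.

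Let $A$ be a maximal arc of degree $h$ in $\PG(2,q)$, and let $G_A$ have vertex set $A$ and, as edges, the secant lines of $A$. The supremum in \eqref{eq:rho} runs over $(q+1)$-uniform hypergraphs, so I take the edges to be the full projective lines, each of size $q+1$, and the vertex set to be the union of these lines. No vertex is isolated. Any two lines of the plane meet in exactly one point, so $G_A$ is linear and intersecting. It remains to check that $G_A$ is not a star, so that Theorem~\ref{thm:ceiling} and \eqref{eq:arc-tau} apply. Because $h\ge 2$, the arc $A$ is not contained in a single line. The secants through one point $a\in A$ number $|A|-1$ divided by $h-1$, which is $q+1$, so every line through $a$ is a secant. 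In total there are $(q+1)|A|/h$ secants, and this exceeds $q+1$ whenever $|A|>h$, which follows from $|A|=qh-q+h$ and $q\ge1$. Hence not all secants pass through one point.

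With these checks done, \eqref{eq:arc-tau} gives $\tau_h(G_A)=|A|$. Then \eqref{eq:secant-count} gives $|E(G_A)|/\tau_h(G_A)=(q+1)/h$, so $\rho_{q+1,k}\ge (q+1)/h$, and \eqref{eq:maxarc-rho} follows.

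The special cases come down to existence of the arcs.
\begin{itemize}
 \item For $k=3$ we have $h=q$. The complement of a line (an affine plane) meets every line in $0$ or $q$ points, so it is a maximal arc of degree $q$ in every $\PG(2,q)$. This proves \eqref{eq:k3-exact}.
 \item For $q$ even and $h\mid q$ with $2\le h\le q$, Denniston's construction \cite{Denniston1969} supplies a maximal arc of degree $h$. This proves \eqref{eq:even-general-exact}.
 \item The case $k=q+1$ gives $h=2$, which divides $q$ for even $q$; this is a hyperoval and yields \eqref{eq:full-even}.
\end{itemize}
The range $3\le k\le q+1$ gives $2\le h\le q$, so the size formula \eqref{eq:maxarc-size} is valid throughout.

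The main obstacle is a definitional point rather than a calculation: $G_A$ must be taken with full lines as edges, so that it is $(q+1)$-uniform and Theorem~\ref{thm:ceiling} applies in the non-star case. The rest is citation and incidence counting.
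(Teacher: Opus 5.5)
Your proposal is correct and follows the paper's proof. Theorem~\ref{thm:ceiling} gives the upper bound, and the secant-line hypergraph $G_A$ together with \eqref{eq:secant-count}--\eqref{eq:arc-tau} gives the matching lower bound; the special cases then reduce to the existence of line complements, Denniston arcs and hyperovals. Your added checks that $G_A$ is linear, intersecting, $(q+1)$-uniform and not a star are sound. The non-star check is harmless but not needed, since \eqref{eq:ceiling} already covers stars.
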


\begin{proof}
The secant-line hypergraph of the maximal arc has ratio $(q+1)/h$ by \eqref{eq:secant-count}--\eqref{eq:arc-tau}, while Theorem~\ref{thm:ceiling} gives the reverse inequality.

For $k=3$, one has $h=q$.  The complement of any projective line is a maximal arc of degree $q$, so \eqref{eq:k3-exact} holds for every $q$.  If $q$ is even, Denniston constructed maximal arcs of every degree $h$ dividing $q$ in $\PG(2,q)$ \cite{Denniston1969}; this gives \eqref{eq:even-general-exact}, and $h=2$ gives \eqref{eq:full-even}.
\end{proof}

The case $h=q$ has an especially simple port structure.  Let $L_\infty$ be a projective line, delete it from the edge set, and let $B=P\setminus L_\infty$.  Every retained line has $q$ private vertices in $B$ and one port on $L_\infty$.  No retained edge contains two ports, so the port shadow is empty.  Taking $N$ arbitrary copies with constant port maps gives
\begin{equation}\label{eq:k3-count}
 |E(H)|=Nq(q+1),
 \qquad
 |V(H)|=Nq^2+q+1.
\end{equation}
Thus the coefficient tends to $(q+1)/q$ with only constant port overhead.  Related affine-type lower constructions appear in the recent crown and hypertree literature \cite{ZhangBroersmaWang2025,AdakVerma2026}.  Equation~\eqref{eq:k3-exact} identifies this construction as an exact optimizer over the entire intersecting-block class.

\section{Truncated projective planes}\label{sec:dual-affine}

Maximal arcs give equality only in special parameter ranges.  A different line deletion gives a block that works for every $h$ and misses the universal ceiling by only $1/h$ additively.

Fix a projective plane $\Pi$ of order $q$ and a point $z$.  Delete $z$ and delete from the edge set all $q+1$ lines through $z$.  The remaining hypergraph will be denoted by $T_q$.  It has
\[
 q(q+1)\text{ vertices},\qquad q^2\text{ edges},
\]
and is linear, intersecting, and $(q+1)$-uniform.  The $q+1$ deleted lines through $z$ determine classes
\[
 C_L:=L\setminus\{z\}\qquad(L\ni z),
\]
which partition $V(T_q)$ into $q+1$ parts of size $q$.  Every edge of $T_q$ contains exactly one vertex from each part.  This incidence structure is dual to an affine plane.

\begin{theorem}[Multiple transversals of $T_q$]\label{thm:Tq-tau}
For every $1\le h\le q+1$,
\begin{equation}\label{eq:Tq-tau}
 \tau_h(T_q)=hq.
\end{equation}
\end{theorem}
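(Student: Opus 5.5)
Both bounds in \eqref{eq:Tq-tau} come from the class structure of $T_q$: the $q+1$ classes $C_L$ partition $V(T_q)$ into parts of size $q$, and every edge meets each class in exactly one vertex.

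For the upper bound, I would choose any $h$ of the classes and let $B$ be their union. Then $|B|=hq$. Since every edge of $T_q$ contains exactly one vertex from each class, every edge meets $B$ in exactly $h$ vertices. So $B$ is an $h$-fold transversal and $\tau_h(T_q)\le hq$.

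For the lower bound, I would use incidence counting against the degrees. I first compute that every vertex of $T_q$ has degree $q$. A vertex $v\ne z$ lies on $q+1$ projective lines, and exactly one of them, the line $vz$, is deleted. Now let $B$ be any $h$-fold transversal. Double counting the pairs $(v,E)$ with $v\in B\cap E$ gives
\[
 hq^2=h|E(T_q)|\le \sum_{E}|E\cap B|=\sum_{v\in B}d(v)=q|B|,
\]
so $|B|\ge hq$. This holds for every $1\le h\le q+1$, including $h=1$ and $h=q+1$. For $h=q+1$, the bound $|B|\ge q(q+1)$ says $B$ must be the whole vertex set, which is consistent with the upper bound.

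Neither direction is a real obstacle. The only point to check carefully is the regularity claim, namely that each vertex has degree exactly $q$ and each edge meets each class exactly once. Both follow from the projective-plane axioms: a line not through $z$ meets each line through $z$ in exactly one point, and that point is not $z$.
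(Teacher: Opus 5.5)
Your proof is correct and follows the paper's proof essentially verbatim: the union of $h$ classes $C_L$ gives the upper bound $hq$, and double counting with the fact that every vertex has degree $q$ gives $hq^2\le q|B|$. Your explicit verification that each retained line meets each class exactly once is a small addition that the paper asserts without proof just before the theorem.
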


\begin{proof}
The union of any $h$ classes $C_L$ meets every edge in exactly $h$ points and has size $hq$, so $\tau_h(T_q)\le hq$.

Conversely, every vertex of $T_q$ lies on exactly $q$ retained projective lines: among the $q+1$ projective lines through the vertex, exactly the line joining it to $z$ was deleted.  If $B$ is an $h$-fold transversal, then
\[
 hq^2
 \le \sum_{E\in E(T_q)}|E\cap B|
 =\sum_{v\in B}d_{T_q}(v)
 =q|B|.
\]
Thus $|B|\ge hq$, proving \eqref{eq:Tq-tau}.
\end{proof}

\begin{theorem}[Truncated-plane bounds]\label{thm:sandwich}
Let $q$ be a prime power, let $3\le k\le q+1$, and put $h=q-k+3$.  Then
\begin{equation}\label{eq:sandwich}
 \frac qh\le \rho_{q+1,k}\le \frac{q+1}{h}.
\end{equation}
Equivalently,
\begin{equation}\label{eq:uniform-asymptotic}
 \rho_{q+1,k}
 =\left(1+O(q^{-1})\right)\frac{q+1}{q-k+3},
\end{equation}
with relative error uniform over $3\le k\le q+1$.
\end{theorem}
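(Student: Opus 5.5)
The proof combines the two results already in hand. The upper bound in \eqref{eq:sandwich} is the case $r=q+1$ of Theorem~\ref{thm:ceiling}, since $h=r-k+2=q-k+3$. For the lower bound we use $T_q$ as a test block in the supremum \eqref{eq:rho}.

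First we check that $T_q$ is admissible. It is built inside $\PG(2,q)$, which exists for every prime power $q$, and it is $(q+1)$-uniform because every retained line misses $z$ and so keeps all $q+1$ of its points. It is linear because two lines share at most one point, and intersecting because any two projective lines meet and their common point is not $z$ (neither line passes through $z$). It has no isolated vertices, since each vertex lies on $q\ge 2$ retained lines. Also $2\le h\le q$, because $3\le k\le q+1$.

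By Theorem~\ref{thm:Tq-tau}, $\tau_h(T_q)=hq$, and $T_q$ has $q^2$ edges. Hence
\[
 \rho_{q+1,k}\ge \frac{|E(T_q)|}{\tau_h(T_q)}=\frac{q^2}{hq}=\frac qh .
\]

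For the equivalent form \eqref{eq:uniform-asymptotic}, divide \eqref{eq:sandwich} by $(q+1)/h$. This gives
\[
 \frac{q}{q+1}\le \rho_{q+1,k}\,\frac{h}{q+1}\le 1,
\]
so the ratio equals $1-\theta/(q+1)$ for some $\theta\in[0,1]$. That is $1+O(q^{-1})$ with an absolute implied constant, independent of $k$, which is the claimed uniformity. There is no real obstacle here; the only substantive input is the lower bound $\tau_h(T_q)\ge hq$, which Theorem~\ref{thm:Tq-tau} already provides.
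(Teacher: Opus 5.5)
Your proposal is correct and follows the paper's argument. The lower bound $q^2/(hq)=q/h$ comes from $T_q$ via Theorem~\ref{thm:Tq-tau}, the upper bound is Theorem~\ref{thm:ceiling} with $r=q+1$, and the uniform relative error follows because the two endpoints differ by exactly the factor $(q+1)/q$. Your explicit check that $T_q$ is admissible in the supremum (uniformity, linearity, intersection, no isolated vertices, and $2\le h\le q$) is a useful detail that the paper leaves to its description of $T_q$ in the preceding section.
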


\begin{proof}
By Theorem~\ref{thm:Tq-tau},
\[
 \frac{|E(T_q)|}{\tau_h(T_q)}
 =\frac{q^2}{hq}
 =\frac qh,
\]
which gives the lower bound.  The upper bound is Theorem~\ref{thm:ceiling}.  The ratio of the upper bound to the lower bound is $(q+1)/q$.
\end{proof}

The port-shadow improvement is particularly transparent for $T_q$.  Choose a minimum $h$-fold transversal equal to the union of $h$ partite classes.  The ports are the vertices in the other $q+1-h$ classes.  Two ports in the same class do not occur together in a retained edge, while any two ports in different classes do.  Thus the port shadow is the complete $(q+1-h)$-partite graph with parts of size $q$, and
\[
 \chi(J)=q+1-h.
\]
It is therefore enough to take the auxiliary affine-plane order with $K+1\ge q+1-h$.  Assigning a separate direction to every port would require $K+1\ge q(q+1-h)$.

For the full crown, $h=2$, so Corollary~\ref{cor:allorders} and Theorem~\ref{thm:sandwich} give
\begin{equation}\label{eq:full-liminf}
 \liminf_{n\to\infty}
 \frac{\exlin_{q+1}(n,C^{q+1}_{1,q+1})}{n}
 \ge \frac q2
\end{equation}
for every prime-power $q$.  If $q$ is even, Theorem~\ref{thm:maxarc-rho} improves the right side to $(q+1)/2$ and proves that no intersecting block can do better.  For odd $q$, the block optimum lies in an interval of width $1/2$:
\[
 \frac q2\le \rho_{q+1,q+1}\le \frac{q+1}{2}.
\]
By comparison, the unrestricted upper bound \eqref{eq:adak-upper-intro} gives
\begin{equation}\label{eq:full-unrestricted-upper}
 \clin_{q+1}(C^{q+1}_{1,q+1})
 \le \frac{q^2+1}{q+1}
 =q-1+\frac{2}{q+1}.
\end{equation}
The unrestricted Tur\'an problem therefore retains a factor-of-two-scale gap at the full-crown endpoint.

\section{Padding to arbitrary uniformities}\label{sec:padding}

The preceding construction uses uniformity $q+1$.  It can be lifted to a larger uniformity by adding private degree-one vertices to each edge.

Let $q\le r-1$ be a prime power.  Form $T_q^{(r)}$ from $T_q$ by adjoining $r-q-1$ new vertices to every edge, with the new vertices used for different edges pairwise disjoint.  We call the original vertices \emph{old} and the new degree-one vertices \emph{leaves}.

\begin{proposition}[Transversal number under padding]\label{prop:padded}
For $1\le h\le r$,
\begin{equation}\label{eq:padded-tau}
 \tau_h(T_q^{(r)})=
 \begin{cases}
  hq,&h\le q+1,\\[0.25em]
  q^2(h-q)+q,&h\ge q+1.
 \end{cases}
\end{equation}
In particular, if $h=r-k+2\le q+1$, then
\begin{equation}\label{eq:padded-rho}
 \rho_{r,k}\ge \frac qh.
\end{equation}
\end{proposition}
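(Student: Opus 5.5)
Write $\ell=r-q-1$ for the number of leaves on each edge. The padded hypergraph $T_q^{(r)}$ is linear, intersecting and $r$-uniform, because the leaves on different edges are pairwise disjoint. Its edge count is still $q^2$. Take any $h$-fold transversal $B$ and split it as $B=B_{\mathrm{old}}\cup B_{\mathrm{leaf}}$. For each edge $E$, the quantity $a_E:=|E\cap B_{\mathrm{old}}|$ is at most $q+1$. The edge needs at least $\max(0,h-a_E)$ of its own leaves in $B$. These leaf sets are private to each edge, so
\[
|B|\ \ge\ |B_{\mathrm{old}}|+\sum_{E}\max(0,h-a_E),
\]
and for a fixed old part this bound is attained. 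The task is therefore to minimize this function over subsets $S=B_{\mathrm{old}}\subseteq V(T_q)$.

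\textbf{Upper bounds (constructions).} For $h\le q+1$, take $S$ to be the union of $h$ classes $C_L$, with no leaves, as in Theorem~\ref{thm:Tq-tau}; this gives $hq$. For $h\ge q+1$, take $S=V(T_q)$, so $|S|=q(q+1)$ and $a_E=q+1$ for every edge. Add $h-q-1$ leaves per edge. The total is $q(q+1)+q^2(h-q-1)=q^2(h-q)+q$, which matches \eqref{eq:padded-tau}. The two formulas agree at $h=q+1$.

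\textbf{Lower bound.} Every old vertex lies in exactly $q$ edges, so double counting gives $\sum_E a_E=q|S|$. Hence
\[
|B|\ \ge\ |S|+\sum_E (h-a_E)=|S|+hq^2-q|S|=hq^2-(q-1)|S|.
\]
For $h\ge q+1$, this bound is decreasing in $|S|$. With $|S|\le q(q+1)$ it gives $hq^2-(q-1)q(q+1)=q^2(h-q)+q$, which settles that case.

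For $h\le q+1$, this bound alone is too weak, so I would use the convexity-type estimate $\max(0,h-a)\ge h-\min(a,h)$. Truncating, $\sum_E \min(a_E,h)\le \min\bigl(q|S|,hq^2\bigr)$. This gives
\[
|B|\ge |S|+hq^2-\min\bigl(q|S|,hq^2\bigr).
\]
If $q|S|\ge hq^2$, then $|B|\ge|S|\ge hq$. Otherwise $|B|\ge hq^2-(q-1)|S|$, and since $|S|<hq$ this is more than $hq^2-(q-1)hq=hq$. In both cases $|B|\ge hq$.

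I expect the main difficulty to be this $h\le q+1$ lower bound, specifically getting the interplay between old vertices and leaves right. The truncation $\min(a_E,h)$ is the step that handles it, because old vertices beyond the $h$-th one on an edge are wasted. Finally, \eqref{eq:padded-rho} follows directly: $T_q^{(r)}$ is an admissible block with $q^2$ edges and $\tau_h=hq$, so $\rho_{r,k}\ge q^2/(hq)=q/h$.
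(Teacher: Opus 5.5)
Your proof is correct. In the regime $h\ge q+1$ it is essentially the paper's argument: split off the old part $S$, charge $h-a_E$ private leaves to each edge, use $\sum_E a_E=q|S|$, and maximize $|S|$.

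Where you differ is the lower bound for $h\le q+1$. The paper does not separate old vertices from leaves there at all. Every vertex of $T_q^{(r)}$ has degree at most $q$ (old vertices have degree $q$, leaves degree $1$), so $hq^2\le\sum_E|E\cap B|=\sum_{v\in B}d(v)\le q|B|$ in one line.

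Your truncation via $\min(a_E,h)$ reaches the same bound, and it is sound; note that $\max(0,h-a)=h-\min(a,h)$ actually holds with equality. Unpacked, it just combines the trivial inequality $|B|\ge|S|$ with $|B|\ge hq^2-(q-1)|S|$. Minimizing the larger of the two over $|S|$ gives $hq$ at $|S|=hq$.

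Each route has an advantage. Yours gives a single reduction to a function of $S$ that covers both regimes and shows why the formula switches at $h=q+1$. The paper's is shorter and needs no case split for $h\le q+1$.

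One small inaccuracy, though you never use it: the remark that, for a fixed old part, the bound $|S|+\sum_E\max(0,h-a_E)$ is attained requires $h-a_E\le r-q-1$ for every edge, since each edge has only $r-q-1$ leaves. In your $h\ge q+1$ construction this is exactly the condition $h\le r$, which is worth stating explicitly.
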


\begin{proof}
Suppose first that $h\le q+1$.  The union of $h$ old partite classes has size $hq$ and meets every padded edge in $h$ old vertices, so $\tau_h\le hq$.  Every old vertex has degree $q$ and every leaf has degree $1\le q$.  Hence any $h$-fold transversal $B$ satisfies
\[
 hq^2
 \le \sum_{v\in B}d(v)
 \le q|B|,
\]
which gives $|B|\ge hq$.

Now suppose $h\ge q+1$.  Let $B$ be an $h$-fold transversal and let $S$ be the set of selected old vertices.  For each edge $E$, put $a_E=|S\cap E|$.  Since the leaves of distinct edges are private, at least $h-a_E$ leaves of $E$ must be selected.  Therefore
\[
 |B|
 \ge |S|+\sum_E(h-a_E).
\]
There are $q^2$ edges and every old vertex has degree $q$, so $\sum_E a_E=q|S|$.  Thus
\[
 |B|\ge hq^2-(q-1)|S|.
\]
The number of old vertices is $q(q+1)$, and the coefficient of $|S|$ is negative.  Hence
\[
 |B|
 \ge hq^2-(q-1)q(q+1)
 =q^2(h-q)+q.
\]
Equality is attained by selecting all old vertices and exactly $h-q-1$ leaves from every edge.  This proves \eqref{eq:padded-tau}.  Since $T_q^{(r)}$ still has $q^2$ edges, the first case gives $|E|/\tau_h=q/h$, proving \eqref{eq:padded-rho}.
\end{proof}

Let $p(r)$ denote the largest prime at most $r-1$.  The prime number theorem implies
\begin{equation}\label{eq:prime-gap}
 p(r)=(1-o(1))r.
\end{equation}
Taking $q=p(r)$ in Proposition~\ref{prop:padded} yields the asymptotic optimization below.

\begin{corollary}[Asymptotics for arbitrary uniformity]\label{cor:alluniform}
As $r\to\infty$,
\begin{equation}\label{eq:all-r-full}
 \rho_{r,r}=\left(1-o(1)\right)\frac r2.
\end{equation}
More generally, for every fixed $\varepsilon>0$, uniformly over $\varepsilon r\le k\le r$,
\begin{equation}\label{eq:all-r-k}
 \rho_{r,k}
 =\left(1+o(1)\right)\frac{r}{r-k+2}.
\end{equation}
\end{corollary}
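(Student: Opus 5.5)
The plan is to sandwich $\rho_{r,k}$ between the padded truncated-plane lower bound \eqref{eq:padded-rho} and the ceiling \eqref{eq:ceiling}, $\rho_{r,k}\le r/h$ with $h=r-k+2$. The only real task is to check that the lower bound comes within a factor $1-o(1)$ of $r/h$, uniformly over the stated range of $k$.

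Take $q=p(r)$, the largest prime at most $r-1$. By \eqref{eq:prime-gap}, $q=(1-o(1))r$ as $r\to\infty$. To apply Proposition~\ref{prop:padded} we need $h\le q+1$. In the range $\varepsilon r\le k\le r$ we have
\[
 h=r-k+2\le (1-\varepsilon)r+2 .
\]
Since $q=(1-o(1))r$, for all sufficiently large $r$ (depending only on $\varepsilon$) this is at most $q+1$. For $k=r$ we have $h=2\le q+1$ for all $r\ge3$.

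Proposition~\ref{prop:padded} then gives $\rho_{r,k}\ge q/h$. Combining with Theorem~\ref{thm:ceiling},
\[
 \frac{q}{r}\cdot\frac{r}{h}\le \rho_{r,k}\le \frac rh .
\]
The factor $q/r=p(r)/r$ is independent of $k$ and tends to $1$. This yields \eqref{eq:all-r-k} with the error uniform in $k$.

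Specializing to $k=r$ (so $h=2$, and $\varepsilon$ is irrelevant) gives
\[
 \frac{p(r)}{2}\le \rho_{r,r}\le \frac r2 ,
\]
which is \eqref{eq:all-r-full}.

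The one point needing care is the side condition $h\le q+1$. If it failed, the second branch of \eqref{eq:padded-tau} would give a much worse ratio. This is exactly why the hypothesis $k\ge\varepsilon r$ is imposed: it keeps $h$ bounded away from $r$ by a linear margin, and that margin absorbs the $o(r)$ prime gap. Everything else is immediate from the prime number theorem estimate \eqref{eq:prime-gap}. Any explicit prime-gap bound such as Bertrand's postulate would give only constant-factor results, so the $o(1)$ genuinely relies on \eqref{eq:prime-gap}.
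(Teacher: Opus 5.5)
Your proposal is correct and follows the paper's own proof essentially step for step. You take $q=p(r)$, use $k\ge\varepsilon r$ together with the prime number theorem to ensure $h\le p(r)+1$ for large $r$, and then sandwich $p(r)/h\le\rho_{r,k}\le r/h$ via Proposition~\ref{prop:padded} and Theorem~\ref{thm:ceiling}; the ratio $r/p(r)\to1$ does not depend on $k$, which gives the uniformity.
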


\begin{proof}
For the full crown, $h=2$.  Proposition~\ref{prop:padded} with $q=p(r)$ gives
\[
 \rho_{r,r}\ge\frac{p(r)}2,
\]
while Theorem~\ref{thm:ceiling} gives $\rho_{r,r}\le r/2$.  Equation \eqref{eq:prime-gap} proves \eqref{eq:all-r-full}.

Now fix $\varepsilon>0$ and assume $\varepsilon r\le k\le r$.  Then
\[
 h=r-k+2\le(1-\varepsilon)r+2.
\]
By \eqref{eq:prime-gap}, for all sufficiently large $r$ this is at most $p(r)+1$.  Proposition~\ref{prop:padded} therefore gives
\[
 \frac{p(r)}h\le\rho_{r,k}\le\frac rh.
\]
The quotient of the two endpoints is $r/p(r)=1+o(1)$, independent of $k$ in the stated range, proving \eqref{eq:all-r-k} uniformly.
\end{proof}

\section{Finite examples and a rank obstruction}\label{sec:finite-rank}

The asymptotic theory has two concrete consequences worth separating from the optimization results.  The first is a small connected counterexample to the coefficient $r/(r-1)$ in uniformity four.  The second shows that a natural incidence-rank inequality cannot hold globally for full-crown-free hypergraphs.

\subsection{A connected 4-uniform example}

For $q=3$, the following triangle-block construction gives a smaller connected example than the direct truncated-plane affine-array construction.

\begin{proposition}\label{prop:469}
There exists a connected linear $4$-uniform $C^4_{1,4}$-free hypergraph with $469$ vertices and $637$ edges.  In particular,
\[
 \frac{637}{469}>\frac43.
\]
\end{proposition}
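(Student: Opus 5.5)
The plan is to apply Theorem~\ref{thm:constrained} with $r=4$, $k=4$, $h=2$, using the whole plane $\PG(2,3)$ as the block. The arithmetic points to this choice: $637=49\cdot 13$ and $469=49\cdot 9+4\cdot 7$. So we want $13$ edges per copy, $49=7^2$ rows coming from $\AG(2,7)$, a private set with $|B|=9$, and $4$ ports, each with alphabet of size $7$. The block $G$ is $\PG(2,3)$, viewed as a $4$-uniform hypergraph on $13$ points with $13$ lines. It is linear and intersecting, and it is not a star.

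\textbf{Step 1 (the triangle transversal).} Take three non-concurrent lines $\ell_1,\ell_2,\ell_3$ of $\PG(2,3)$ and let $B=\ell_1\cup\ell_2\cup\ell_3$. This set has $3\cdot 4-3=9$ points. We check that $B$ is a $2$-fold transversal by splitting the lines into three kinds.
\begin{itemize}
\item A side $\ell_i$ lies inside $B$, so it meets $B$ in $4$ points.
\item A line through exactly one vertex of the triangle meets $B$ in that vertex and in one point of the opposite side, so in $2$ points.
\item Any other line meets the three sides in three distinct points.
\end{itemize}
The complement $U=P\setminus B$ has $4$ points. Any two of these points span a line of the plane, which is an edge of $G$, so the port-shadow $J_G(U)$ is $K_4$ and $\chi(J_G(U))=4$.

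\textbf{Step 2 (port array).} We use the affine-plane port array from Section~\ref{sec:ports} with $K=7$. This satisfies $K+1=8\ge 4$, and it gives $|X|=49$ rows and $|Y_u|=7$ for each port. Theorem~\ref{thm:constrained} then produces a linear $C^4_{1,4}$-free hypergraph with $49\cdot 13=637$ edges and at most $49\cdot 9+4\cdot 7=469$ vertices.

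To get exactly $469$ vertices, we note that every symbol is actually used. Each of the $4\cdot 7$ port symbols is an affine line in the assigned parallel class, and every such line contains rows. The smaller choice $K=3$ also satisfies the coloring condition, but it gives ratio $117/93<4/3$. This is why the larger order $K=7$ is needed.

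\textbf{Step 3 (connectivity).} Each copy $\phi_x(\PG(2,3))$ is connected, because the plane is connected and $\phi_x$ is injective. Consider two rows $x,y$ of $\AG(2,7)$. They lie on a common affine line, and that line belongs to one parallel class. If that class is assigned to some port $u$, then the two copies share the vertex $(u,\psi_u(x))$. If the class is assigned to no port, we use another route: any two rows are joined by a path of steps inside the chosen directions. Two distinct directions already suffice, because moving along one direction and then the other reaches any point of $\AG(2,7)$.

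Hence the union of all copies is connected. Finally, $637/469=91/67>4/3$, because $273>268$.

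The main obstacle I expect is Step 1, namely locating a $2$-fold transversal of size $9$. Maximal $2$-arcs do not exist for odd $q$, so the ceiling value $13\cdot 2/4$ is unavailable, and the size of $B$ has to be certified by the explicit line-by-line case check above.
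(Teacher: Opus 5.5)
Your proposal is correct and follows the paper's proof essentially step for step: the triangle $B$ as a double transversal of $\PG(2,3)$, the four ports placed on distinct parallel classes of $\AG(2,7)$, and connectivity through a meeting point of two directions. Your check that every port symbol is actually used, so that the vertex count is exactly $469$ and not merely at most $469$, supplies a small detail the paper leaves implicit.
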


\begin{proof}
In $\PG(2,3)$, take three nonconcurrent lines and let $B$ be their union.  The three lines have $3(3+1)-3=9$ points in total.  Every projective line meets $B$ at least twice: a line through a triangle vertex meets the third side at another point, while a line through no triangle vertex meets all three sides in distinct points.  Hence $B$ is a double transversal.  The projective plane has $13$ points, so there are $4$ ports.

Use the basic affine-direction instance of Theorem~\ref{thm:constrained} with the four ports assigned distinct parallel classes of $\AG(2,7)$.  There are $7^2=49$ copies of the $13$-edge projective-plane block.  The private vertices contribute $9\cdot49$, and the four port coordinates contribute $4\cdot7$.  Thus
\[
 |V|=9\cdot49+4\cdot7=469,
 \qquad
 |E|=13\cdot49=637.
\]
Theorem~\ref{thm:constrained} gives linearity and crown-freeness.  Connectivity follows because at least two distinct affine directions are used: any two copies can be joined through an intermediate copy lying at the intersection of one line in each of two selected directions.  Finally,
\[
 3\cdot637=1911>1876=4\cdot469.
\]
\end{proof}

\subsection{A global incidence-rank obstruction}

Let $N(H)$ be the edge-by-vertex incidence matrix of a finite hypergraph $H$, over $\R$.  The analogous global incidence-rank inequality with coefficient $(q+1)/q$ is false for full crowns.

\begin{corollary}[Rank obstruction]\label{cor:rank}
For every prime power $q\ge3$, there is a finite linear $C^{q+1}_{1,q+1}$-free hypergraph $H$ such that
\begin{equation}\label{eq:rank-fail}
 (q+1)\rank_{\R}N(H)<q|E(H)|.
\end{equation}
\end{corollary}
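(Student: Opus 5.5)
The plan is to use the trivial bound $\rank_{\R}N(H)\le|V(H)|$, since $N(H)$ has $|V(H)|$ columns. It then suffices to build a linear $C^{q+1}_{1,q+1}$-free hypergraph $H$ with
\[
 (q+1)|V(H)|<q|E(H)|,
\]
that is, with edge density strictly above $(q+1)/q$. The truncated-plane amalgamation at the full-crown endpoint has density close to $q/2$. Since $q/2>(q+1)/q$ exactly when $q^2>2q+2$, which holds for $q\ge3$, this should be enough, provided the port overhead is made negligible.

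\emph{Choice of block and ports.} Take $r=k=q+1$, so $h=2$. Let $G=T_q$, let $B$ be the union of two partite classes $C_L$, and let $U$ consist of the remaining $q-1$ classes. By the proof of Theorem~\ref{thm:Tq-tau}, $B$ is a $2$-fold transversal with $|B|=2q$, and $T_q$ is linear and intersecting. The port shadow is complete $(q-1)$-partite, so $\chi(J_{T_q}(U))=q-1$. Fix a prime power $K$ with $K+1\ge q-1$, and use the affine-plane port array of $\AG(2,K)$ described after Theorem~\ref{thm:constrained}. Theorem~\ref{thm:constrained} then yields a linear $C^{q+1}_{1,q+1}$-free $H$ with
\[
 |E(H)|=K^2q^2,\qquad |V(H)|\le 2qK^2+(q-1)qK .
\]

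\emph{Verification.} The required inequality $(q+1)|V(H)|<q|E(H)|$ follows from
\[
 (q+1)\bigl(2qK^2+(q-1)qK\bigr)<q^3K^2 .
\]
Dividing by $qK$, this becomes
\[
 K(q^2-2q-2)>(q+1)(q-1).
\]
For $q\ge3$ the coefficient $q^2-2q-2$ is at least $1$, so any sufficiently large prime $K$ works; for example, any prime $K>q^2$ suffices, and it also meets $K+1\ge q-1$. Combining this with $\rank_{\R}N(H)\le|V(H)|$ gives \eqref{eq:rank-fail}.

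The only delicate point is the verification step, where the port overhead must not spoil the strict inequality. Taking $K$ large relative to $q$ handles it. For $q=3$ one could alternatively use Proposition~\ref{prop:469} directly, since there $3\cdot637>4\cdot469$.
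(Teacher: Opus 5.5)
Your proof is correct and follows the paper's argument: you bound $\rank_{\R}N(H)\le|V(H)|$ and exhibit a full-crown-free hypergraph of density exceeding $(q+1)/q$ via the truncated-plane amalgamation with $h=2$. The only difference is that you instantiate Theorem~\ref{thm:constrained} explicitly with a fixed prime $K>q^2$ and check $K(q^2-2q-2)>q^2-1$ directly, whereas the paper simply appeals to the liminf bound \eqref{eq:full-liminf}.
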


\begin{proof}
By \eqref{eq:full-liminf}, there are finite full-crown-free hypergraphs with edge--vertex ratio arbitrarily close to at least $q/2$.  For $q\ge3$,
\[
 \frac q2>\frac{q+1}{q},
\]
because $q^2>2q+2$.  Choose such an $H$ with
\[
 |E(H)|>\frac{q+1}{q}|V(H)|.
\]
Since $\rank N(H)\le|V(H)|$,
\[
 (q+1)\rank N(H)
 \le(q+1)|V(H)|
 <q|E(H)|.
\]
\end{proof}

This obstruction is caused by global port sharing.  Projective geometry may describe a local equality configuration, while identifications among many blocks increase the global crown-free density.  Any endpoint incidence-rank argument must account for this amalgamation mechanism.

\section{Nonintersecting templates and safe blockers}\label{sec:safe}

The $h$-fold transversal condition uses the hypothesis that the block is intersecting only once: two petals attached at private vertices are forced into the same copy, and intersection prevents them from being disjoint.  For a nonintersecting block, the exact local quantity is the maximum number of pairwise disjoint private petals that one base edge can support.

Let $G=(P,\calE)$ be a finite linear $r$-uniform hypergraph, not necessarily intersecting, and let $B\subseteq P$.  For $E\in\calE$, define $\mu_B(E)$ to be the largest integer $t$ for which there are pairwise disjoint edges
\[
 F_1,\ldots,F_t\in\calE\setminus\{E\}
\]
and distinct points $b_1,\ldots,b_t\in B\cap E$ such that
\[
 F_i\cap E=\{b_i\}\qquad(1\le i\le t).
\]

\begin{definition}[Safe blocker]\label{def:safe}
A set $B\subseteq P$ is \emph{$k$-safe} if, for every $E\in\calE$,
\begin{equation}\label{eq:safe}
 |E\setminus B|+\mu_B(E)\le k-1.
\end{equation}
Equivalently, if $h=r-k+2$, then
\[
 |E\cap B|-\mu_B(E)\ge h-1
\]
for every edge $E$.
\end{definition}

\begin{theorem}[Safe-block transfer and affine converse]\label{thm:safe}
The conclusion of Theorem~\ref{thm:constrained} remains valid if the hypotheses ``$G$ is intersecting and $B$ is an $h$-fold transversal'' are replaced by ``$G$ is linear and $B$ is $k$-safe.''

Conversely, consider the canonical affine construction in which distinct ports are assigned distinct parallel classes of $\AG(2,K)$ and every affine point is used as a copy.  If $B$ is not $k$-safe and
\begin{equation}\label{eq:safe-K}
 K-1>(k-1)r,
\end{equation}
then the resulting amalgamation contains a copy of $C^r_{1,k}$.
\end{theorem}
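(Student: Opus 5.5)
The plan is to reuse the construction of Theorem~\ref{thm:constrained} verbatim for the forward direction, and to build an explicit crown greedily inside the affine array for the converse.

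\textbf{Forward direction.} Build $H$ exactly as in the proof of Theorem~\ref{thm:constrained}. The linearity argument and the vertex and edge counts there never used that $G$ is intersecting or that $B$ is an $h$-fold transversal, so they carry over unchanged. Only crown-freeness needs a new argument.

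Fix a putative base $E_x$ and split the attachment vertices of a crown on it into port attachments and private attachments.
\begin{itemize}
\item There are at most $|E\setminus B|$ port attachments.
\item A petal attached at a private vertex $(x,b)$ must be an edge of copy $x$, hence of the form $F_x$ with $F\in\calE\setminus\{E\}$. Linearity of $H$ gives $F_x\cap E_x=\{(x,b)\}$. Since $\phi_x$ is injective, this pulls back to $F\cap E=\{b\}$.
\item For the same reason, pairwise disjointness of such petals $F_{i,x}$ in $H$ is equivalent to pairwise disjointness of the $F_i$ in $G$. So the private petals form a configuration counted by $\mu_B(E)$, and there are at most $\mu_B(E)$ of them.
\end{itemize}
The total number of petals is therefore at most $|E\setminus B|+\mu_B(E)\le k-1$, so no $k$-crown exists.

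\textbf{Converse.} Choose $E$ violating \eqref{eq:safe}, i.e.\ $|E\setminus B|+\mu_B(E)\ge k$. Take any copy $x$ and use $E_x$ as the base. Place private petals $F_{1,x},\dots,F_{t,x}$ with $t=\mu_B(E)$; by the pullback remark they are pairwise disjoint and meet $E_x$ exactly in distinct private vertices. Then, for enough ports $u\in E\setminus B$ to reach $k$ petals in total, attach the petal $E_{y_u}$, where $y_u\ne x$ lies on the affine line $\ell_u$ of direction $u$ through $x$. Since the ports receive distinct directions, the key geometric fact is: two distinct affine points share at most one line, hence agree in at most one port coordinate. This yields the following.
\begin{itemize}
\item $E_{y_u}\cap E_x=\{(u,\psi_u(x))\}$: the private vertices differ, and for $w\ne u$ we have $\psi_w(y_u)\ne\psi_w(x)$.
\item $E_{y_u}$ is disjoint from every $F_{i,x}$. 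Private vertices differ, and a common port $w$ would force $\psi_w(y_u)=\psi_w(x)$ with $w\neq u$; this is impossible, since $x$ and $y_u$ already share the line $\ell_u$.
\item Different choices satisfy $y_u\ne y_v$, since $\ell_u\cap\ell_v=\{x\}$.
\end{itemize}

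\textbf{Main obstacle: mutual disjointness of the port petals.} The remaining condition is that $E_{y_u}$ and $E_{y_v}$ share no port $w$. I would choose the $y_u$ greedily.
\begin{itemize}
\item When choosing $y_u$, each earlier $y_v$ and each port $w$ of $E$ forbid the at most one point where the $w$-direction line through $y_v$ meets $\ell_u$.
\item If $w=u$, that line is parallel to and distinct from $\ell_u$, because $y_v\notin\ell_u$, so it forbids nothing.
\item At most $k-1$ earlier petals and at most $r$ ports give at most $(k-1)r$ forbidden points. By \eqref{eq:safe-K}, this is fewer than the $K-1$ available points of $\ell_u\setminus\{x\}$.
\end{itemize}
The greedy choice therefore succeeds, and we obtain $k$ pairwise disjoint petals at distinct attachment vertices of $E_x$, i.e.\ a copy of $C^r_{1,k}$. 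The care needed is in making sure no coincidence, whether private or port, was missed in this count; the geometric fact above handles all of them.
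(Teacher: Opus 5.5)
Your proposal is correct and follows the paper's proof essentially step for step. The forward direction bounds the private petals by $\mu_B(E)$ and the port petals by $|E\setminus B|$, and the converse places private petals in one copy $x$ and then chooses external copies $E_{y_u}$ greedily on the affine lines through $x$, using the same $(k-1)r<K-1$ count (when $\mu_B(E)\ge k$, keep just $k$ private petals, as the paper does with $s=\min\{\mu_B(E),k\}$).
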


\begin{proof}
The linearity proof is unchanged.  Fix a possible base $E_x$.  A petal attached at a private vertex of $E_x$ lies in copy $x$.  The preimages of all such private petals are pairwise disjoint edges of $G$ meeting $E$ at distinct points of $B\cap E$, so there are at most $\mu_B(E)$ of them.  At most one additional petal can attach at each port vertex of the base, giving at most $|E\setminus B|$ port petals.  Condition \eqref{eq:safe} therefore forbids $k$ petals.

For the converse, suppose $B$ is unsafe for an edge $E$, and fix a copy indexed by an affine point $x$.  Put
\[
 s=\min\{\mu_B(E),k\}.
\]
Choose $s$ pairwise disjoint template edges from a witnessing family in the definition of $\mu_B(E)$; their images in copy $x$ give $s$ pairwise disjoint private petals.  If $s=k$, these petals already form the required crown with base $E_x$.

Assume therefore that $s=\mu_B(E)<k$.  Since $B$ is unsafe,
$|E\setminus B|\ge k-s$.  Choose distinct ports
$u_1,\ldots,u_{k-s}\in E\setminus B$.  For each $u_j$, choose a point
$y_j\ne x$ on the affine line through $x$ in the direction assigned to
$u_j$, and use the copy of $E$ indexed by $y_j$ as an external petal.
It meets the base $E_x$ exactly at the $u_j$-port because distinct ports
use distinct directions.  Choose the points $y_j$ greedily.  Each previously
chosen external petal has $r$ vertices, and for each such vertex at most one
of the $K-1$ candidates on the $u_j$-line can make the new copy meet that
vertex.  Before the $k$th petal is chosen, fewer than $(k-1)r$ candidates
are therefore forbidden, so \eqref{eq:safe-K} guarantees a valid choice.

Every external petal is disjoint from every private petal: a private
template petal meets $E$ only at its private attachment in $B$, and private
vertices carry the copy label.  The resulting $k$ petals are pairwise
disjoint, so $E_x$ is the base of a copy of $C^r_{1,k}$.
\end{proof}

Let $\beta_k(G)$ be the minimum size of a $k$-safe set, when one exists.  Theorem~\ref{thm:safe} and the affine array construction imply
\begin{equation}\label{eq:beta-transfer}
 \liminf_{n\to\infty}
 \frac{\exlin_r(n,C^r_{1,k})}{n}
 \ge \frac{|E(G)|}{\beta_k(G)}.
\end{equation}
Thus nonintersecting templates enlarge the search space.  Within intersecting templates the safe-block relaxation has the same ceiling.

\begin{proposition}[Intersecting safe blocks]\label{prop:safe-ceiling}
Let $G$ be a finite linear intersecting $r$-uniform hypergraph and let $B$ be $k$-safe.  Put $h=r-k+2$.  Then
\begin{equation}\label{eq:safe-ceiling}
 \frac{|E(G)|}{|B|}\le\frac rh.
\end{equation}
\end{proposition}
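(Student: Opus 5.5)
The plan is to reduce $k$-safety to an almost-$h$-fold transversal condition, using the intersecting hypothesis. Then I would rerun the incidence count of Theorem~\ref{thm:ceiling}, with a correction term that the degree-one vertices pay for. Write $m=|E(G)|$.

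\textbf{Step 1: $\mu_B(E)\le1$.} Since $G$ is intersecting, no two edges are disjoint, so a witnessing family for $\mu_B(E)$ has at most one member. Definition~\ref{def:safe} then gives, for every edge $E$,
\[
|E\cap B|\ge h-1+\mu_B(E).
\]
Split the edges into $\calE_1$ (those with $\mu_B(E)=1$) and $\calE_0$ (those with $\mu_B(E)=0$). Edges in $\calE_1$ meet $B$ in at least $h$ points, and edges in $\calE_0$ in at least $h-1\ge1$ points, because $h=r-k+2\ge2$.

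\textbf{Step 2: the structure of $\calE_0$.} In a linear intersecting hypergraph, every other edge $F$ meets $E$ in exactly one point. Hence $\mu_B(E)=0$ means that no other edge passes through any point of $B\cap E$. So every vertex of $B\cap E$ with $E\in\calE_0$ has degree one. Let $L\subseteq B$ be the set of these degree-one vertices. Distinct edges of $\calE_0$ contribute disjoint subsets of $L$, so
\[
|L|\ge (h-1)|\calE_0|.
\]

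\textbf{Step 3: non-star case.} By \eqref{eq:maxdeg}, $d_G(v)\le r$ for every $v\in B$. Vertices in $L$ have degree $1$, which gives an extra slack of $r-1$ each. Therefore
\[
r|B|\;\ge\;\sum_{v\in B}d_G(v)+(r-1)|L|\;=\;\sum_{E}|E\cap B|+(r-1)|L|.
\]
Using Steps 1 and 2, the right-hand side is at least
\[
hm-|\calE_0|+(r-1)(h-1)|\calE_0|\;\ge\; hm,
\]
since $(r-1)(h-1)\ge1$. This gives $m/|B|\le r/h$.

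\textbf{Step 4: star case.} This case needs separate treatment, since \eqref{eq:maxdeg} fails for stars. If $m=1$, then $|B|\ge h-1\ge1$, so the ratio is at most $1\le r/h$. Otherwise let $c$ be the center; every other edge meets $E$ only at $c$.
\begin{itemize}
 \item If $c\in B$, then $\mu_B(E)=1$ for every edge, so each edge contains $h-1$ private non-center points of $B$. By linearity these sets are disjoint, so $|B|\ge1+(h-1)m$.
 \item If $c\notin B$, the same disjointness gives $|B|\ge(h-1)m$.
\end{itemize}
In both cases $m/|B|\le 1/(h-1)\le r/h$, because $r(h-1)\ge h$ whenever $h\ge2$ and $r\ge2$.

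The main point needing care is Step 2: one has to check that an edge with $\mu_B(E)=0$ genuinely forces all its $B$-points to have degree one. This uses that in a linear intersecting hypergraph every other edge meets $E$ in a single point, and that point must lie outside $B$. It is also where the non-star assumption enters, through the degree bound \eqref{eq:maxdeg}.
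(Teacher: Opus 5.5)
Your proof is correct and follows the paper's argument essentially step for step. You get $\mu_B(E)\le 1$ from intersection, show that blocker vertices on edges with $\mu_B(E)=0$ have degree one and so contribute $r-1$ slack each to the incidence count (giving $r|B|\ge hm$ in the non-star case), and handle stars by counting disjoint non-center blocker points. The only cosmetic difference is that you charge the slack to every edge with $\mu_B(E)=0$, whereas the paper charges it only to edges with $|E\cap B|=h-1$; both choices work equally well.
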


\begin{proof}
First suppose $G$ is not a star.  Then $\Delta(G)\le r$ by the argument in Theorem~\ref{thm:ceiling}.  Since $G$ is intersecting, $\mu_B(E)\le1$ for every edge $E$: two distinct edges in a family counted by $\mu_B(E)$ would have to be disjoint, contradicting intersection.

Safety implies $|B\cap E|\ge h-1$ for every edge.  If $|B\cap E|=h-1$, then $|E\setminus B|=k-1$, so \eqref{eq:safe} forces $\mu_B(E)=0$.  Every selected vertex on such an edge has degree one; otherwise another edge through that vertex would witness $\mu_B(E)\ge1$.

Let $m=|E(G)|$ and let $t$ be the number of edges with exactly $h-1$ selected vertices.  There are at least $(h-1)t$ distinct selected degree-one vertices.  If
\[
 I(B)=\sum_{E\in E(G)}|E\cap B|=\sum_{v\in B}d_G(v),
\]
then
\[
 I(B)\ge hm-t
\]
and, because the $(h-1)t$ degree-one vertices fall short of degree $r$ by $r-1$ each,
\[
 I(B)\le r|B|-(r-1)(h-1)t.
\]
Hence
\[
 r|B|\ge hm+\bigl((r-1)(h-1)-1\bigr)t\ge hm,
\]
which gives \eqref{eq:safe-ceiling}.

If $G$ is a star with $m$ edges, omit the center and select $h-1$ leaves from every edge; this is safe and has size $(h-1)m$.  Conversely, if a safe set omits the center, safety forces at least $h-1$ selected leaves on each edge.  If it contains the center and $m\ge2$, then $\mu_B(E)=1$ for every edge, so safety again forces at least $h-1$ selected leaves on each edge; when $m=1$, safety forces at least $h-1$ selected vertices in total.  Thus every safe set has size at least $(h-1)m$, and
\[
 \frac{m}{|B|}\le\frac1{h-1}\le\frac rh.
\]
\end{proof}

Proposition~\ref{prop:safe-ceiling} identifies a concrete frontier: any safe-block construction that beats $r/h$ must use a genuinely nonintersecting template.

\section{Open problems}\label{sec:conclusion}

The results above determine the optimal efficiency of the intersecting-block construction up to the remaining finite-geometric existence questions, and asymptotically whenever the crown size is linear in the uniformity.  The unrestricted coefficient $\clin_r(C^r_{1,k})$ may be larger; the safe-block formulation gives a concrete way to test whether nonintersecting templates improve the leading constant.  Four natural problems remain.

\begin{problem}[Odd-order additive gap]\label{prob:odd-gap}
Let $q$ be an odd prime power and $2\le h<q$.  Determine
\[
 \rho_{q+1,q-h+3}.
\]
In particular, can a non-projective intersecting block attain $(q+1)/h$ even though $\PG(2,q)$ has no nontrivial maximal $h$-arc, or is the truncated-plane value $q/h$ optimal in some odd-order cases?
\end{problem}

\begin{problem}[Stability of efficient blocks]\label{prob:stability}
Classify pairs $(G,B)$ for which
\[
 r|B|-h|E(G)|=o(|E(G)|).
\]
The defect identity forces most blocker incidences to lie on degree-$r$ vertices and most edges to be covered almost exactly $h$ times.  Determine whether linearity and intersection force a geometric model after deleting a lower-order set of incidences.
\end{problem}

\begin{problem}[Beyond the intersecting ceiling]\label{prob:beyond}
Find a nonintersecting linear $r$-uniform template $G$ satisfying
\[
 \frac{|E(G)|}{\beta_k(G)}>\frac{r}{r-k+2},
\]
or prove that no such example exists in a broad natural class of templates.
\end{problem}

\begin{problem}[Optimal constrained-port cost]\label{prob:ports}
For a graph $J$ and $N$ rows, determine the minimum possible value of
\[
 \sum_{u\in V(J)}|Y_u|
\]
over all $J$-constrained port arrays.  The complete graph has the familiar strength-two packing-array scale $\Theta(|V(J)|\sqrt N)$ at affine-plane orders, while the empty graph has constant cost.  Determine how intermediate graph structure controls the optimal overhead.
\end{problem}

\end{document}